\newtheorem{theorem}{Theorem}
\newtheorem{lemma}[theorem]{Lemma}
\newtheorem{corollary}[theorem]{Corollary}
\newtheorem{definition}[theorem]{{Definition}}
\numberwithin{equation}{section}
\newcommand {\N}{\mathbb{N}} 
\newcommand {\R}{\mathbb{R}} 
\def\remark{\par\medbreak\noindent{\sc Remark}\quad\enspace}
\begin{document}

\title[Anosov flows on non-compact Riemannian manifolds ]{ A Note on Anosov flows of non-compact Riemannian manifolds }

\author{Gerhard Knieper }
\date{\today}
\address{Faculty of Mathematics,
Ruhr University Bochum, 44780 Bochum, Germany}
\email{gerhard.knieper@rub.de}
\subjclass{ 37C40, 53C22}
\keywords{ Geodesic Anosov flows, Conjugate points}
\thanks {This work was partially supported by the German Research Foundation (DFG),  CRC TRR 191,  {\it Symplectic
structures in geometry, algebra and dynamics.} }
\begin{abstract}
In this note we formulate a condition for complete non-compact Riemannian manifolds which implies no conjugate points
in case that the geodesic flow is Anosov with respect to the Sasaki metric.
\end{abstract}

\maketitle
In 1974 Klingenberg \cite{Kl1} proved, using Morse theory, that geodesic Anosov flows on compact Riemannian manifolds do not have
conjugate points.   In 1987  Man\'{e} \cite{Ma} showed with the help of the Maslov index that geodesic flows on complete Riemannian manifolds with finite volume
have no conjugate points provided there exists a continuous flow invariant Lagrangian section. Since the stable and unstable
bundles provide such invariant sections, geodesic Anosov flows on manifolds with finite volume do not have conjugate points as well.
In the same paper, Man\'{e} claimed that  any complete non-compact Riemannian manifold with lower curvature bound has no conjugate points
in case the geodesic flow is Anosov with respect to the Sasaki metric. However, as we noticed in \cite{Kn1}, the proof contains a mistake
which occurs in proposition II.2 of his article.\\

This note was inspired by a question of the authors of \cite{GUS}, whether one could prove Man\'{e}'s claim under certain extra geometric conditions.
The positive answer to this question would allow them to remove an assumption in their work. To formulate this result, we start by introducing some notations and definitions.

In the following, $(M,g)$ will denote a complete Riemannian manifold, $ \pi:TM \to M$ the tangent bundle with the canonical projection and
$
SM = \{v \in TM \;|\; \|v\| = 1\},
$
 the unit tangent bundle with respect to the Riemannian metric $g$.
The tangent space $T_vSM$ of $SM$ at $v \in SM$ is given by
$$
\{(x,y) \;|\; x,y \in T_{\pi(v)} M , y \perp v\},
$$
where we use the splitting of $T_vTM$ into horizontal and vertical spaces.
Using this decomposition,
the linearization of the geodesic
$$
D \phi^t(v) : T_{\pi v}M \times T_{\pi v} M \to T_{\pi \phi^t(v)}
 M \times T_{\pi \phi^t (v)} M
$$ 
is given by $D\phi^t (v)(x,y) = (J(t), J'(t)) $, where $J(t)$ is the Jacobi
field which is the 
solution of the Jacobi equation  
$$
J''(t) + R(J(t), \phi^t(v)) \phi^t(v) = 0,
$$
along
 $c_v$ 
with $J(0) = x, J'(0) = y$. Here $J' = \frac{D}{dt} J$ denotes the covariant derivative along $c_v$ and $R(X,Y) Z$ the Riemannian curvature tensor.
There is a natural 1-form $\Theta$ on $TM$ defined by 
$$
 \Theta_v(\xi) = \langle v,d\pi_v(\xi)\rangle.
$$
Using the
decomposition of $\xi = (x,y)$ into horizontal and vertical part,
introduced above, we obtain  
$$
\Theta_v(x,y) = \langle v,x\rangle.
$$
The differential $d\Theta$ is the canonical symplectic form and is given by 
$$
d\Theta_v ((x_1, y_1), (x_2, y_2)) = \langle y_1, x_2\rangle - \langle y_2, x_1\rangle.
$$
The
 canonical metric $ g_S$ on $TM$ given by
$$
g_S((x_1, y_1), (x_2, y_2)) = \langle x_1, x_2 \rangle +  \langle y_1, y_2 \rangle 
$$ 
for $(x_1, y_1), (x_2, y_2) \in T_vTM$  is called the Sasaki metric on $TM$.
Denote by
$$
E^{\phi}(v) = \{(\lambda v , 0) \;| \; \lambda  \in \mathbb{R} \}
$$
the 1-dimensional space tangent to the geodesic flow at $v \in SM$. The orthogonal complement 
$$
N(v) := E^{\phi}(v)^\perp
$$
in $T_vSM$ with respect to the Sasaki metric  defines a bundle $N$ invariant 
under the linearization
of the geodesic flow. Furthermore, $N$ is a symplectic bundle, i.e., 
the symplectic form restricted to $N$
is non-degenerate. A Lagrangian subspace $L(v) \subset N(v)$ is called a Lagrangian graph if
$V(v) \cap L(v) = \emptyset$.
\begin{lemma} \label{A}
If $L( \phi^t(v)) =D \phi^t(v) L(v)$ is a Lagrangian graph for all $ t \in [a,b]$, then $c_v:[a,b] \to M$ has no conjugate points.
\end{lemma}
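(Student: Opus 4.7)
The plan is to argue by contradiction using the index form. Suppose there exist $t_1, t_2 \in [a,b]$ with $t_1 < t_2$ such that $c_v(t_1)$ and $c_v(t_2)$ are conjugate. Then there is a nontrivial Jacobi field $J$ along $c_v$ with $J(t_1) = J(t_2) = 0$. A short computation (using that $\langle J,\phi^t v\rangle$ satisfies an ODE with zero second derivative and vanishes at two points) shows $J \perp \phi^t v$ and $J' \perp \phi^t v$ for all $t$, so $(J(t),J'(t)) \in N(\phi^t v)$. Moreover, the standard integration by parts gives $I_{t_1,t_2}(J,J) := \int_{t_1}^{t_2}\bigl(|J'|^2 - \langle R(J,\phi^t v)\phi^t v, J\rangle\bigr)\,dt = 0$.

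Next I would exploit the Lagrangian graph hypothesis to produce a distinguished frame of Jacobi fields. Choose a basis $(x_i, y_i) \in L(v)$, $i = 1,\ldots,n-1$, and let $K_i(t)$ be the corresponding Jacobi fields, i.e.\ $(K_i(t), K_i'(t)) = D\phi^t(v)(x_i,y_i)$; these span the perpendicular Jacobi fields whose initial values lie in $L(v)$. Since $L(\phi^t v)$ is a graph, meaning $L(\phi^t v)\cap V(\phi^t v) = \{0\}$, no nontrivial linear combination of the $K_i$ vanishes at any $t \in [a,b]$; hence the matrix $A(t)$ with columns $K_i(t)$ (written in a parallel orthonormal frame along $c_v$ perpendicular to $\phi^t v$) is invertible throughout $[a,b]$. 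Write $J(t) = A(t) f(t)$; the hypothesis $J(t_1) = J(t_2) = 0$ and invertibility of $A(t_1), A(t_2)$ force $f(t_1) = f(t_2) = 0$.

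The key computation is the pointwise identity
\[
|J'|^2 - \langle R(J,\phi^t v)\phi^t v, J\rangle \;=\; \frac{d}{dt}\langle A'f, Af\rangle + |Af'|^2.
\]
To derive it, I expand $J' = A'f + Af'$ and use $A'' + RA = 0$ (since each column is Jacobi) together with the Lagrangian condition. The latter translates to $A^T A'$ being symmetric, which is precisely what is needed to collect the cross terms into the total derivative $\frac{d}{dt}\langle A'f, Af\rangle$. This identity (a form of the Clairaut/Riccati-type rewriting of the index form) is the technical heart of the argument and the step where the Lagrangian hypothesis is used essentially.

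Integrating the identity from $t_1$ to $t_2$ and using $f(t_1) = f(t_2) = 0$ yields
\[
0 \;=\; I_{t_1,t_2}(J,J) \;=\; \int_{t_1}^{t_2} |A(t)f'(t)|^2\,dt.
\]
Invertibility of $A$ on $[a,b]$ forces $f' \equiv 0$, hence $f \equiv 0$ (using $f(t_1)=0$), hence $J \equiv 0$, contradicting nontriviality. Therefore no such conjugate pair exists in $[a,b]$, proving the lemma. The main obstacle is verifying the pointwise identity above; once that is in hand the rest is the standard index-form conclusion.
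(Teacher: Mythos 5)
Your argument is correct: the paper itself only cites Lemma 2.7 of \cite{Kn1} for this statement, and your proof is precisely the standard index-form argument behind that reference — build the matrix Jacobi solution $A(t)$ from a basis of $L(v)$, use the graph condition to get invertibility of $A$ on $[a,b]$, and use the Lagrangian (Wronskian) condition $A^TA'=(A^TA')^T$ to rewrite the integrand of the index form as $\frac{d}{dt}\langle A'f,Af\rangle+|Af'|^2$. The pointwise identity checks out, and the conclusion $f'\equiv 0$, hence $J\equiv 0$, correctly yields the contradiction.
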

\begin{proof}
 For a proof see Lemma 2.7 in \cite{Kn1}
 \end{proof}

\begin{definition}\label{B}
Let $(M,g)$ be a complete Riemannian manifold and 
$\| \cdot \|$ the norm
 on $TSM$ induced by the Sasaki metric. The geodesic flow 
$\phi^t: SM \to SM$ is called Anosov flow if there exist constants $k, C > 0$ 
and  a splitting
$$
T_vSM = E^s(v) \oplus E^u(v) \oplus E^\phi(v)
$$
such that $E^\phi(v) = \mathrm{span} \{X_G (v)\}$ and 
$$
\|D \phi^t(v) \xi \| \le C \cdot e^{-kt} \| \xi \|
$$
for all $\xi \in E^s(v), t \ge 0$, as well as
$$
\| D\phi^{-t} (v) \xi \| \le C e^{-kt} \|\xi\|
$$
for all $\xi \in E^u(v)$, $t \ge 0$. 
\end{definition}

\begin{lemma}\label{C}
Let $(M, g)$ be a complete Riemannian manifold with lower sectional curvature bound $-\beta^2$. If the
geodesic flow is Anosov with constants $k, C > 0$ as in Definition \ref{B}, there exists a constant $\sigma =\sigma(\beta, k, C) $ with the following property. If 
$$
E^s(v) \cap V(v) \not= \{0\}
$$ 
then the geodesic $c_v$ has
conjugate points on the interval $[-1, \sigma]$.
\end{lemma}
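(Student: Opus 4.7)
I would argue by contradiction. Assume that $c_v$ has no conjugate points on $[-1,\sigma]$ and pick $\xi=(0,y)\in E^s(v)\cap V(v)$ with $\|y\|=1$. Let $J$ be the perpendicular Jacobi field along $c_v$ with $J(0)=0$, $J'(0)=y$, so that $D\phi^t(v)\xi=(J(t),J'(t))$. The defining Anosov inequality applied at $v$ gives $\|J(t)\|^2+\|J'(t)\|^2\le C^2e^{-2kt}$ for $t\ge 0$, while applying it at $\phi^{-t}(v)$ to the vector $D\phi^{-t}(v)\xi\in E^s(\phi^{-t}v)$ gives the backward lower bound $\|D\phi^{-t}(v)\xi\|\ge C^{-1}e^{kt}$. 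Since the assumption gives no conjugate points on $[-1,\sigma]$, Rauch's comparison with the constant-curvature $-\beta^2$ space form bounds $\|J(t)\|\le\sinh(\beta|t|)/\beta$ on that interval; in particular $\|J(-1)\|\le\sinh(\beta)/\beta$, so the backward Anosov bound forces a definite lower estimate on $\|J'(-1)\|$ once $k$ is large enough relative to $\beta$ and $C$.

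The technical heart is a Riccati analysis of the Jacobi tensor. Let $D(t)$ be the Jacobi tensor along $c_v$ with $D(0)=0$, $D'(0)=\id$; by the no-conjugate-points assumption $D$ is invertible on $(0,\sigma]$, and the symmetric operator $U(t)=D'(t)D(t)^{-1}$ satisfies $U'+U^2+R=0$ with $R\ge-\beta^2\id$. Taking $w(t)=\tr U(t)/(n-1)$ and using $\tr U^2\ge(\tr U)^2/(n-1)$ yields the scalar Riccati inequality $w'\le-w^2+\beta^2$, while the initial singularity $U(t)\sim\id/t$ gives $w(0^+)=+\infty$. On the other hand, $\tr U=(\log\det D)'$, and the exponential decay of $\|J\|=\|Dy\|$ from Anosov, together with the backward bound $\|J'(-1)\|\ge\delta(\beta,k,C)>0$ extracted above, produces an integral constraint on $w$ that rules out stagnation near the upper equilibrium $v=\beta$ of the comparison ODE $v'=-v^2+\beta^2$. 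Combining these forces $w$ to descend strictly below $-\beta$ within a time $\sigma_1=\sigma_1(\beta,k,C)$.

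Once $w(t_\ast)<-\beta$ for some $t_\ast\le\sigma_1$, the comparison ODE $v'=-v^2+\beta^2$, for which $v=-\beta$ is an unstable equilibrium, blows $w$ up to $-\infty$ in further time $\sigma_2=O(1/\beta)$, as one sees by explicit integration of $v'=-(v-\beta)(v+\beta)$. This blow-up is equivalent to the vanishing of $\det D(t)$, so $c_v(0)$ is conjugate to $c_v(t)$ for some $t\in(0,\sigma_1+\sigma_2]$, contradicting the standing assumption. Choosing $\sigma:=\sigma_1+\sigma_2$ gives the lemma.

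The main obstacle is the quantitative descent step: converting the Anosov-driven integral constraint on $w$ into the pointwise passage below $-\beta$ in time depending only on $\beta,k,C$. The scalar Riccati alone only provides an upper bound on $w'$, so $w$ could in principle approach $\beta$ from above and linger; ruling this out requires combining the integral constraint (from forward Anosov decay of $\|J\|$) with the quantitative lower bound $\|D\phi^{-1}(v)\xi\|\ge e^k/C$ controlling the initial descent rate of $w$ from $+\infty$. The delicate case is $k$ not much larger than $\beta$, where the averaged estimate alone is insufficient and one must track the eigendirection of $U$ corresponding to $y$ more carefully through the scalar Riccati to produce the explicit $\sigma(\beta,k,C)$.
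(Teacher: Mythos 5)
Your setup is right (the forward decay $\|J(t)\|^2+\|J'(t)\|^2\le C^2e^{-2kt}$, the backward growth $\|D\phi^{-t}(v)\xi\|\ge C^{-1}e^{kt}$, and the Rauch bound $\|J(t)\|\le \sinh(\beta|t|)/\beta$ on an interval without conjugate points), but the two steps that actually produce the conjugate point are gaps, and you flag them yourself. First, the ``definite lower estimate on $\|J'(-1)\|$'' only follows if $C^{-1}e^{k}>\sinh(\beta)/\beta$; the Anosov constants $k,C$ are arbitrary admissible constants (one may always take $k$ smaller and $C$ larger), so this inequality can fail, and the hypothesis only gives you the window $[-1,\sigma]$, so you cannot go further back in time to amplify the backward growth. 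Second, and more seriously, the trace/determinant Riccati argument cannot be closed in dimension $\ge 3$: the exponential decay of $\|J\|=\|Dy\|$ controls $\langle U\hat J,\hat J\rangle$ in the single direction of $J$, while the remaining $n-2$ directions are only bounded above by $\beta\coth(\beta(t+1))$ and may sit near $+\beta$; then $w=\tr U/(n-1)$ need never descend below $-\beta$ unless $k$ is large compared to $(n-1)\beta$, and in any case $\sigma$ would acquire a dependence on $\dim M$. Also, the quantity $\|J'(-1)\|$ lives at $t=-1$, before the singularity of $U=D'D^{-1}$ at $t=0$, and does not enter the Riccati equation on $(0,\sigma]$ in any direct way, so it cannot ``control the initial descent rate of $w$'' as claimed. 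What you have is therefore a collection of correct a priori estimates plus an acknowledged missing core.

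The proof the paper relies on (Lemma 2.14 of \cite{Kn1}) takes a different and much more robust route through the index form, and it is worth seeing why it avoids both of your obstacles. Assume $c_v:[-1,\sigma]\to M$ has no conjugate points, so the index form $I_{-1}^{\sigma}$ is positive semi-definite on piecewise smooth fields vanishing at $-1$ and $\sigma$. Let $X$ be the broken field equal to $0$ on $[-1,0]$, to $J$ on $[0,\sigma-1]$, and to a linear cut-off of the parallel transport of $J(\sigma-1)$ on $[\sigma-1,\sigma]$; the forward Anosov decay and the curvature lower bound give $I(X,X)\le (2+\beta^2)C^2e^{-2k(\sigma-1)}$. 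Let $Y$ be a tent field, vanishing at $-1$ and at $\sigma-1$, with $Y(0)=y=J'(0)$; integration by parts gives the exact cross term $I(X,Y)=-\langle J'(0),y\rangle=-1$, while $I(Y,Y)\le 1+\beta^2/3+(\sigma-1)^{-1}+\beta^2(\sigma-1)/3$ grows only linearly in $\sigma$. Then $0\le I(X+\epsilon Y,X+\epsilon Y)=I(X,X)-2\epsilon+\epsilon^2I(Y,Y)$ fails for $\epsilon=1/I(Y,Y)$ once $\sigma=\sigma(\beta,k,C)$ is large enough, since the exponentially small $I(X,X)$ loses to the polynomially small $1/I(Y,Y)$. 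Note that the non-degenerate quantity here is simply $\|J'(0)\|=1$: no backward Anosov estimate and no trace averaging are needed, and the only reason the interval must contain $0$ in its interior (the point of the paper's remark correcting Man\'{e}) is that $Y$ must vanish at the endpoints while being equal to $y$ at $t=0$. I would recommend rebuilding your argument along these lines rather than trying to repair the Riccati approach.
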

\begin{proof}
 For a proof see Lemma 2.14 in \cite{Kn1}
 \end{proof}
 \remark The Lemma above is related to proposition II.2 in  
Man\'{e}'s article \cite{Ma}. In this proposition he even claimed that $c_v(0)$ should be conjugated to $c_v(t_0)$ for some $t_0 >0$. 
However, the proof which is based on the Index form needs information about the geodesic on an intervall containing $0$ as an interior point.

 The following Theorem contains the main result of this paper.
\begin{theorem}
Let  $(M,g)$ be a complete, connected and non-compact Riemannian manifold with sectional curvature  bounded from below by  $- \beta^2$
 such that the geodesic flow is Anosov with respect to the Sasaki metric with constants $k,C$ as in Definition \ref{B}.
 Assume that the following three conditions are satisfied
\begin{enumerate}
\item
For all $v \in SM$ there exists an open neighborhood
$U(v) \subset SM$ such that
$
\lim_{t \to \infty}d(c(0), c(t))  = \infty
$
uniformly  for all geodesic $c:\R \to M$  with $\dot c(0) \in U(v) $.
\item
There  exists a compact set  $K \subset M$ such that for all $p \in M\setminus K$ and all geodesics $c$ with $c(0) =p$ the segment $c:[-1, \sigma] \to M$ has no conjugate points
with $\sigma = \sigma(\beta, k, C) $  as in Lemma \ref{C}.
\item There exists at least one geodesic without conjugate points.
\end{enumerate}
Then $(M,g)$  has no conjugate points.
\end{theorem}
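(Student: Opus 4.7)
The plan is to prove the equivalent statement that
\[
\mathcal{C} := \{v \in SM : c_v \text{ has no conjugate points}\}
\]
coincides with $SM$, by showing that $\mathcal{C}$ is non-empty, open, and closed in the connected manifold $SM$. Assumption~(3) supplies non-emptiness, and $SM$ is connected since the Anosov hypothesis forces $\dim M \geq 2$. The pivotal equivalence to record at the outset is
\[
v \in \mathcal{C} \iff E^s(\phi^t v) \cap V(\phi^t v) = \{0\} \text{ for every } t \in \mathbb{R};
\]
the forward direction is the contrapositive of Lemma~\ref{C} applied at each orbit point (shifting along the flow), and the backward direction is Lemma~\ref{A} applied with the invariant Lagrangian subspace $L(v) = E^s(v)$, which is Lagrangian because for an Anosov geodesic flow the stable subbundle is automatically isotropic and has half the dimension of $N(v)$.

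For openness, I would fix $v \in \mathcal{C}$. Condition~(1) applied at $v$ and, via time reversal, also at $-v$ furnishes a neighborhood $U(v) \subset SM$ and a time $T > 0$ such that $c_w(t) \in M \setminus K$ for every $w \in U(v)$ and every $|t| \geq T$. Condition~(2) together with the contrapositive of Lemma~\ref{C} then yields $E^s(\phi^t w) \cap V(\phi^t w) = \{0\}$ on these tails. On the compact parameter range $|t| \leq T$ the same transversality holds along the orbit of $v$ by the characterization above, and continuity of the subbundles $E^s$ and $V$ on $SM$, combined with compactness of $[-T, T]$, lets me shrink $U(v)$ so that the transversality persists for every $w$ in the shrunken neighborhood and every $|t| \leq T$. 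Combining the two ranges and invoking the characterization places the shrunken neighborhood inside $\mathcal{C}$.

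For closedness, I would take $v_n \in \mathcal{C}$ with $v_n \to v$ and argue by contradiction, assuming $c_v$ has a conjugate pair $s_1 < s_2$. By the Morse index theorem the segment $c_v|_{[s_1, s_2 + \varepsilon]}$ has Morse index at least one for small $\varepsilon > 0$. Since $v_n \to v$ implies $c_{v_n} \to c_v$ in $C^\infty$ on this compact segment, the corresponding index forms converge in operator norm after a parallel-transport identification of normal fields along different geodesics; lower semi-continuity of the Morse index for a continuously varying family of symmetric quadratic forms then forces $c_{v_n}|_{[s_1, s_2 + \varepsilon]}$ to have index $\geq 1$ for $n$ large, producing a conjugate point on this segment and contradicting $v_n \in \mathcal{C}$. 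Hence $\mathcal{C}$ is closed as well, and connectedness of $SM$ gives $\mathcal{C} = SM$. I expect the openness step to be the main technical difficulty: it is the only place where both hypotheses~(1) and~(2) are used essentially, and it requires one to carefully coordinate the continuity/compactness argument on the ``core'' $[-T, T]$ with the geometric hypothesis outside $K$ on the non-compact tails $|t| \geq T$.
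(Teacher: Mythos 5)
Your proof is correct, and its overall skeleton (the no-conjugate-point set is nonempty, open, and closed in the connected manifold $SM$) is exactly the paper's; the interest lies in how the two halves are executed. For closedness the paper simply cites Man\'{e}, whereas you reprove it via semicontinuity of the Morse index --- fine, and essentially what the cited proof does. For openness the paper works on the complement: given $v_n\to v$ with $c_{v_n}$ having conjugate points, the contrapositive of Lemma \ref{A} produces times $t_n$ where $E^s$ meets the vertical, Lemma \ref{C} and condition (2) force $c_{v_n}(t_n)\in K$, condition (1) bounds $|t_n|\le T$, and a final passage to the limit gives conjugate points for $c_v$ on a fixed compact interval --- a step that again rests on persistence of conjugate points under $C^\infty$ limits of geodesic segments, i.e.\ the same semicontinuity fact underlying closedness. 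You instead prove openness directly, through the equivalence that $c_v$ is free of conjugate points iff $E^s(\phi^t v)\cap V(\phi^t v)=\{0\}$ for all $t$ (Lemma \ref{A} with $L=E^s$ in one direction, the contrapositive of Lemma \ref{C} in the other), handling the tails $|t|\ge T$ with conditions (1)--(2) and the core $|t|\le T$ with continuity of the subbundles and openness of transversality on the compact orbit segment. Your route avoids the paper's limiting step but makes explicit two facts the paper leaves implicit: that $E^s$ is a flow-invariant Lagrangian subbundle of $N$ (needed for Lemma \ref{A} to apply at all, in either proof) and that $E^s$ is continuous; both follow from the uniform Anosov constants together with $E^s,E^u\subset\ker\Theta$ and the orthogonality $N=(E^\phi)^\perp$, so nothing is lost on the non-compact manifold. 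In short: the paper reuses one semicontinuity-of-conjugate-points fact twice, while you trade its second use for continuity of the Anosov splitting; both arguments are sound and use conditions (1) and (2) in the same essential way.
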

   \remark Note, that the existence of a geodesic without conjugate points is guaranteed 
   if their exists a  geodesic $c_v$ which does not intersect  $K$. Otherwise,
 by Lemma  \ref{A} there would exist $t \in \R$ such that  $E^s( \phi^{t}(v))\cap V(\phi^{t}(v) ) \not= \emptyset$ and from Lemma \ref{C}  
 follows that $c_{v}:[-1 + t, \sigma +t] \to M$ has conjugate points
  contradicting the assumption that  $c_{v}$ is in the complement of $K$. 
 \begin{proof}
Consider the set 
$$
C(SM) = \{v \in SM \mid c_v: \R \to M \; \text{has no conjugate points} \}. 
$$
  It is known that $C(SM)$ is closed. For a proof see \cite{Ma}.
  Now we show that $C(SM)$ is open. To prove this, consider a sequence $v_n \in SM\setminus C(SM)$ converging to $v$.
  Then by Lemma  \ref{A} there is 
  $t_ n  \in \R$ such that $E^s( \phi^{t_n}(v_n))\cap V(\phi^{t_n}(v_n) ) \not= \emptyset$. 
  Lemma \ref{C} implies that $c_{v_n}:[-1 + t_n, \sigma +t_n] \to M$
  has conjugate points and from condition (2) we conclude  $c_{v_n}(t_n) \in K$. By condition (1)
  there exist  open neighborhoods $U(v), U(-v) \subset SM$ of $v$ such that
 $$
 \lim_{t \to \infty} d(c_w(t), c_w(0)) = \infty
  $$
  for all $w \in U(v)$ and   $w \in U(-v)$.
  This implies the existence of some $T >0$ such that for all $n \in \N$ with $v_n \in U(v)$ and $-v_n \in U(-v)$ we
  have  $|t_n| \le T$ and, therefore, $c_v: [-1+T,  \sigma +T) \to M$ has conjugate points as well.
  This shows $C(SM)$ is open and since by condition (3)  the set $C(SM)$  is non-empty we have $ C(SM) =SM$. Hence $(M,g)$ has no conjugate points.
 \end{proof}
 
 \begin{corollary}
 Let  $(M,g)$ be a complete, connected  and non-compact Riemannian manifold such that the sectional curvature is bounded from below by  $- \beta^2$
and such that the geodesic flow is Anosov with respect to the Sasaki metric with constants $k,C$ as in Definition \ref{B}.
 Assume that the following three conditions are satisfied
\begin{enumerate}
\item
For all $v \in SM$ there exists an open neighborhood
$U(v) \subset SM$ such that
$
\lim_{t \to \infty}d(c(0), c(t))  = \infty
$
uniformly  for all geodesic $c:\R \to M$  with $\dot c(0) \in U(v) $.
\item
There exists a ball $B(p,r)$ of radius $r$ about $p$ such that the sectional curvature  on
$M \setminus B(p,r)$ is smaller than
$(\frac{\pi}{\sigma +1})^2$ with $\sigma = \sigma(k,C, \beta) $ as in Lemma \ref{C}.
\item
There exists a geodesic without conjugate points.
\end{enumerate}
Then $(M,g)$  has no conjugate points.

 \end{corollary}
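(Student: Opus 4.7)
The plan is to reduce the statement to the preceding Theorem, of whose three hypotheses (1) and (3) appear verbatim in the Corollary. Hence only condition (2) of the Theorem requires verification: I must exhibit a compact set $K \subset M$ such that for every $q \in M \setminus K$ and every geodesic $c$ with $c(0) = q$, the segment $c|_{[-1,\sigma]}$ has no conjugate points.

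First I would take $K := \overline{B(p, r + \max(1,\sigma))}$, which is compact by Hopf--Rinow since $(M,g)$ is complete. For $q \in M \setminus K$ and any unit-speed geodesic $c$ with $c(0) = q$, the triangle inequality gives
$$
d(c(t), p) \geq d(q, p) - |t| > r \quad \text{for every } t \in [-1,\sigma],
$$
so the entire arc $c([-1,\sigma])$ lies in $M \setminus B(p,r)$, where by hypothesis the sectional curvature is strictly less than $(\pi/(\sigma+1))^2$.

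Next I would invoke Rauch's comparison theorem. Since $c([-1,\sigma])$ is compact and the sectional curvature is continuous, the pointwise strict upper bound along the arc passes to a uniform constant $\kappa_c < (\pi/(\sigma+1))^2$ with $K_M \leq \kappa_c$ along the segment. Rauch's theorem then forces the first conjugate point along $c$, measured from any base parameter in $[-1,\sigma]$, to occur at parameter distance at least $\pi/\sqrt{\kappa_c} > \sigma + 1$; in particular, no two points of $c|_{[-1,\sigma]}$ can be conjugate. This verifies condition (2), and the Theorem then delivers the Corollary. The only delicate point I anticipate is the strict inequality in hypothesis (2): were the bound merely $\leq (\pi/(\sigma+1))^2$, Rauch would yield conjugate distance only $\geq \sigma + 1$, leaving open the possibility that the endpoints $c(-1)$ and $c(\sigma)$ are conjugate; the upgrade to a strict bound $\kappa_c < (\pi/(\sigma+1))^2$ uses compactness of the arc, which is exactly why enlarging the radius to $r + \max(1,\sigma)$ and invoking Hopf--Rinow matters.
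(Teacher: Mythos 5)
Your proof is correct and follows essentially the same route as the paper: reduce to the Theorem by verifying its condition (2) via Rauch comparison on geodesic arcs of length $\sigma+1$ that stay outside $B(p,r)$. In fact your choices are more careful than the paper's one-line proof, which takes $K=B(p,r+1)$ (sufficient only when $\sigma\le 1$, whereas your $K=\overline{B(p,r+\max(1,\sigma))}$ works in general) and leaves implicit the endpoint case of Rauch that you handle via compactness and the strict curvature bound.
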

  
  \begin{proof}
  Consider  $r>0$  and $B(p,r)$ such that the sectional curvature  on $M \setminus B(p,r)$ is smaller than $(\frac{\pi}{\sigma +1})^2$. Then each geodesic  $c:[0,\sigma +1 ] \to M \setminus B(p,r)$
  has no conjugate points and the set $K =B(p,r+1)$ fulfills the assumption in the theorem above. In particular, $(M,g)$ has no conjugate points.
  \end{proof}
   \remark With the same argument as in the remark above a geodesic without conjugate points exists,  provided there is a geodesic which does not intersect $B(p,r)$.


\begin{thebibliography}{99}
 \bibitem{GUS} 
 C.  ~R.  ~Gramham, C.  ~Guillarmou, G.~ Uhlmann and P.~ Stefanov: {\it X-ray transform and boundary rigidity for asymptotically hyperbolic manifolds}, arXiv:1709.05053v1, math.DG,
 15 Sep. 2017, 1--54.
  
 \bibitem{Kl1}
W.~Klingenberg:
{\it Riemannian manifolds with geodesic flow of Anosov type},
Annals of Math. {\bf 99} (1974), 1--13.

 \bibitem{Kn1} G.~Knieper.  \textit{Hyperbolic Dynamics and
    Riemannian Geometry}, in Handbook of Dynamical Systems, Vol. 1A
  2002, Elsevier Science B., eds. B.~Hasselblatt and A.~Katok, (2002),
  453--545.
  \bibitem{Ma}
R.~Man\'{e}:
{\it On a theorem of Klingenberg},
Dynamical Systems and Bifurcation Theory, M.~Camacho, M.~Pacifico and 
F.~Takens eds., 
Pitman Research Notes in Math. {\bf 160} (1987), 319--345.

\end{thebibliography}
 \end{document}